\newtheorem{theorem}{Theorem}[section]
\newtheorem{lemma}[theorem]{Lemma}
\newtheorem{corollary}[theorem]{Corollary}
\newtheorem{observation}[theorem]{Observation}
\renewcommand{\a}{\alpha}
\newcommand{\ceil}[1]{\left \lceil #1 \right \rceil}
\newcommand{\of}[1]{\left( #1 \right)}
\newcommand{\set}[1]{\left\{ #1 \right\}}
\newcommand{\abs}[1]{\left| #1 \right|}
\newcommand{\tbf}[1]{\textbf{#1}}
\renewcommand{\a}{\alpha}
\renewcommand{\b}{\beta}
\newcommand{\up}{\textrm{up}}
\newcommand{\clecn}{\mathcal{C}_{\le cn}}
\newcommand{\clen}{\mathcal{C}_{\le n}}
\title{On the size Ramsey number of all cycles versus a path}
\author{ {\Large Deepak Bal} \thanks{\texttt{deepak.bal@montclair.edu} } \qquad
{\Large Ely Schudrich} \thanks{\texttt{schudriche1@montclair.edu}} 
\\ Department of Mathematical Sciences \\ Montclair State University \\ Montclair, NJ, USA
}
\date{}
\begin{document}
\maketitle

\begin{abstract}
We say $G\to (\mathcal{C}, P_n)$ if $G-E(F)$ contains an $n$-vertex path $P_n$ for any spanning forest $F\subset G$.
The size Ramsey number $\hat{R}(\mathcal{C}, P_n)$ is the smallest integer $m$ such that there exists a graph $G$ with $m$ edges for which $G\to  (\mathcal{C}, P_n)$. Dudek, Khoeini and Pra{\l}at proved that for sufficiently large $n$,
$2.0036n \le \hat{R}(\mathcal{C}, P_n)\le 31n$. In this note, we improve both the lower and upper bounds to $2.066n\le \hat{R}(\mathcal{C}, P_n)\le 5.25n+O(1).$ Our construction for the upper bound is completely different than the one considered by Dudek, Khoeini and Pra{\l}at. We also have a computer assisted proof of the upper bound $\hat{R}(\mathcal{C}, P_n)\le \frac{75}{19}n +O(1) < 3.947n $. 
\end{abstract}

\section{Introduction}

Let $\mathcal{F}$ be a family of graphs and let $H$ be a graph. We say that $G\to(\mathcal{F}, H)$ if every red/blue coloring of the edges of $G$ contains a monochromatic red copy of some graph from $\mathcal{F}$ or a monochromatic blue copy of $H$. The \emph{size Ramsey number} is defined as
\[\hat{R}(\mathcal{F}, H) = \min\set{ |E(G)|\,:\,G\to (\mathcal{F}, H)    }.\]
In the case where $\mathcal{F}=  \set{F}$, we will write $\hat{R}(F,H)$ for $\hat{R}(\mathcal{F},H)$ and we write $\hat{R}(H)$ for $\hat{R}(H,H)$.
To prove the upper bound $\hat{R}(\mathcal{F}, H) \le m$, one must prove the existence of a graph $G$ with $m$ edges such that $G\to (\mathcal{F}, H)$. To prove the lower bound $\hat{R}(\mathcal{F}, H) \ge m$, one must show that for every graph $G$ on $m-1$ edges, there is a 2 coloring which avoids both monochromatic graphs from $\mathcal{F}$ and $H$.

Let $P_n$ be the path on $n$ vertices. The size Ramsey number $\hat{R}(P_n)$ has been extensively studied, perhaps due to the fact that Erd\H{o}s \cite{Er} offered \$100  for a proof or disproof of $\hat{R}(P_n) = O(n)$. Beck answered the question \cite{B2}, showing that $\hat{R}(P_n) \le 900n$. After a series of improvements to the upper bound \cite{Bol, DP1,Let,DP2} and the lower bound \cite{B2, Bol, DP2, BD}, the state of the art is $(3.75 + o(1))n \le \hat{R}(P_n)\le 74n$ for $n$ sufficiently large. 
The size Ramsey number of $C_n$,  the cycle of length $n$, was first proven to be linear in $n$ by Haxell, Kohayakawa, and {\L}uczak \cite{HKL} with use of the sparse regularity lemma. A proof of this avoiding the use of regularity and providing explicit constants was given by Javadi, Khoeini, Omidi and Pokrovskiy \cite{JKOP}, who proved that $\hat{R}(C_n)\le 10^6cn$ where $c=843$ if $n$ is even and $c= 113482$ if $n$ is odd.   The proofs of  these upper bounds as well as the best known upper bounds for $\hat{R}(P_n)$ use random (regular) graphs as their construction. 

For any $c\in \mathbb{R}_+$, let $\mathcal{C}_{\le cn}$ be the family of all cycles of length at most $cn$ and let $\mathcal{C}$ be the family of all cycles. In \cite{DKP}, Dudek, Khoeini and Pra{\l}at initiated the study of $\hat{R}(\clecn, P_n)$ and $\hat{R}(\mathcal{C}, P_n)$. 
We remark that the parameter $\hat{R}(\mathcal{C}, P_n)$ is perhaps a natural one to study. If $G\to (\mathcal{C}, P_n)$, then $G$ contains a path of order $n$ after the removal of the edges of any spanning forest. 

Concerning lower bounds, first note that for any $c\in \mathbb{R}_+$, $\hat{R}(\clecn, P_n)\ge \hat{R}(\mathcal{C}, P_n) \ge 2(n-1)$. The first inequality follows from the fact that any coloring of a graph which avoids all cycles in red, clearly avoids all cycles of length at most $cn$ in red.  For the second inequality, take any (connected) graph on $2(n-1)-1$ edges (and at least $n$ vertices), color any spanning tree red, and note that there are not enough edges remaining to form a blue $P_n$. It is not immediately clear how one can move away from this trivial lower bound, but in \cite{DKP}, the authors managed to prove that for sufficiently large $n$ and any $c\in \mathbb{R}_+$, $\hat{R}(\clecn, P_n)\ge \hat{R}(\mathcal{C}, P_n) \ge 2.00365n$. 

For the upper bound, the authors of \cite{DKP}
use a random graph construction and techniques similar to those in \cite{DP1,  DP2, Let} to prove that 
\begin{align}\label{eq:dkpmain}
\hat{R}(\clecn, P_n) \le \begin{cases}
\frac{80\log (e/c)}{c}n & \textrm{for }c<1\\
31n & \textrm{for } c\ge 1
\end{cases}\end{align}
Note that as $c\to 0$, this upper bound tends to infinity. It is mentioned in \cite{DKP} that due to monotonicity ($m_1 \ge m_2 \implies \hat{R}(\mathcal{C}_{\le m_1}, P_n) \le \hat{R}(\mathcal{C}_{\le m_2},P_n)$), it is perhaps plausible that there is some decreasing function $\b(c)$, such that for each fixed $c>0$, $\hat{R}(\clecn, P_n) \sim \b(c)n$. They mention that the ``limiting case'' $c\to \infty$ corresponds to  $\hat{R}(\mathcal{C}, P_n)$ but they are only able to prove the upper bound $\hat{R}(\mathcal{C}, P_n) \le \hat{R}(\clen, P_n) \le 31n$.  

In this note, we show  that a significant improvement in the upper bound for $\hat{R}(\mathcal{C}, P_n)$ can be attained, not by considering the limit as $c$ grows large, but rather by considering very \emph{small} values of $c$. In fact, for our improvement, it is enough to  only consider red cycles of length $3,4$ or $5$. This fact may seem surprising given the behavior of the upper bound provided in \eqref{eq:dkpmain} as $c\to0$, but in light of the construction we provide, the surprise diminishes. 
Recall that for a graph $G=(V,E)$, the $k$th power, $G^k$ is a graph on vertex set $V$ in which  two vertices are adjacent if they are of distance at most $k$ in graph $G$. In our main theorem, we abandon random constructions altogether and show that a very structured graph, the third power of a path, suffices.

\begin{theorem}\label{thm:main}
Let $n\ge 2$ and let $N \ge \frac74 n  +10$. Then $P_{N}^3 \to (\mathcal{C}_{\le 5}, P_n)$. 
\end{theorem}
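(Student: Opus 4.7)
The plan is to prove the contrapositive: given any $2$-coloring of $E(P_N^3)$ in which the red subgraph $R$ has girth at least $6$ (i.e., no red cycle of length $\le 5$), I will construct a blue path on $n$ vertices.

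First I would record several local consequences of $\mathrm{girth}(R) \ge 6$. On any $5$ consecutive vertices of $P_N$, every potential cycle has length at most $5$, so $R$ restricted to those vertices is a forest, using at most $4$ of the $9$ edges of $P_N^3$ present. On any $4$ consecutive vertices, $R$ forms a forest on a $K_4$, with at most $3$ edges. Three ``girth forces blue'' rules I would use repeatedly: (i) if $\{i,i+1\}$ and $\{i+1,i+2\}$ are both red, then $\{i,i+2\}$ is blue (else red $C_3$); (ii) if $\{i,i+1\}, \{i+1,i+2\}, \{i+2,i+3\}$ are all red, then $\{i,i+3\}$ is blue (else red $C_4$); and (iii) analogous rules forbid red $C_5$'s closing up on $5$ consecutive vertices. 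These rules drive the later case analysis.

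The plan is then to construct the blue path in a block-by-block fashion with target density $4$ blue-path vertices per $7$ consecutive positions of $P_N$, matching $N \approx \tfrac{7}{4}n$. The heart is the following Key Local Lemma: for any current endpoint $u$ of the blue path and any valid red configuration on $W = \{u, u+1, \ldots, u+7\}$, there exist $u' \in \{u+4, u+5, u+6, u+7\}$ and a blue path $u = w_0, w_1, \ldots, w_\ell = u'$ with $\ell \ge 4$ and all $w_i \in \{u, u+1, \ldots, u'\}$. Iterating from $u_0 = 1$ (and skipping the first few vertices in degenerate cases, an $O(1)$ loss absorbed by the $+10$ slack), after $k = \lceil (n-1)/4 \rceil$ applications the blue path has at least $n$ vertices and its endpoint is at most $1 + 7k \le \tfrac{7}{4}n + O(1) \le N$, which fits inside $P_N$ whenever $N \ge \tfrac{7}{4}n + 10$.

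The main obstacle is proving the Key Local Lemma, which I expect to require a careful case analysis on the $18$ edges of $P_N^3$ inside $W$. I would organize the cases by the pattern of red edges of distance $1$ in $W$, i.e., by the $R$-components among the consecutive edges $\{u+i, u+i+1\}$ for $0 \le i \le 6$. When few consecutive step-$1$ edges are red, the blue step-$1$ edges of $W$ already yield a path of the required length. When many are red, the ``girth forces blue'' rules ensure most of the step-$2$ and step-$3$ edges in $W$ are blue, allowing zigzag blue paths such as $u, u+2, u+4, u+1, u+3, u+5$ or variants. The delicate part will be to verify, in every admissible red configuration, that such a blue path can be chosen with endpoint landing in the prescribed range $\{u+4, \ldots, u+7\}$, and to handle degenerate starts (e.g.\ when $u$ has all three forward distance-$\le 3$ edges red) by shifting the starting vertex by a few positions.
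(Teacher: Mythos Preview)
Your overall strategy—building a blue path segment-by-segment with density at least $4/7$ so that $N\approx \tfrac74 n$ suffices—is exactly the approach the paper takes. The paper's Lemma~\ref{lem:mainlem} is the analogue of your Key Local Lemma, and the stitching argument is Observation~\ref{obs:stitch}. So the plan is sound in outline, but your formulation of the local lemma has two genuine gaps.

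First, and most importantly, you omit the inductive invariant on the new endpoint $u'$. For the iteration to work, the next segment must be a blue path using only vertices $\ge u'$, so you need $u'$ to have at least one blue forward edge (i.e.\ $\up(u')\neq RRR$). You acknowledge the degenerate case only for the \emph{initial} vertex (``shift the starting vertex by a few positions''), but nothing in your statement prevents landing on a $u'$ with $\up(u')=RRR$ mid-iteration, at which point the process stalls. The paper's lemma explicitly builds this into the conclusion: the endpoint $k$ is guaranteed to satisfy $\up(k)\ne RRR$.

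Second, your quantitative constraints on the segment are both stricter than what the paper proves and than what is actually needed. You require $u'\in\{u+4,\dots,u+7\}$ and $\ell\ge 4$ simultaneously. The paper instead allows the endpoint $k$ to range over $\{1,\dots,9\}$ and only asks that the \emph{density} $|V(P_B)\cap\{1,\dots,k\}|/k$ be at least $4/7$. In the paper's case analysis, many cases terminate with $k=1$ (e.g.\ $\up(0)=BRR$ gives $P_B=01$), and several cases are forced out to $k=9$ precisely because the natural endpoint at $k=7$ has $\up(7)=RRR$ (e.g.\ the subcase of $\up(0)=RRB$ yielding $P_B=0357689$). Your fixed window $\{u+4,\dots,u+7\}$ would fail there unless you also carry the invariant and redo the case analysis to find alternate endpoints—work that is not indicated in your sketch. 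Replacing ``$u'\le u+7$ and $\ell\ge 4$'' by ``some $k\le 9$ with density $\ge 4/7$ and $\up(k)\ne RRR$'' fixes both issues and is what the paper actually proves.
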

By monotonicity, this result improves the entire range of results stated in \eqref{eq:dkpmain}.

\begin{corollary}
For any $c \in \mathbb{R}^{+}$,  
\[\hat{R}(\mathcal{C}, P_n)\le \hat{R}(\mathcal{C}_{\le cn}, P_n) \le \hat{R}(\mathcal{C}_{\le 5}, P_n) \le \frac{21}{4}n + 27.\]
\end{corollary}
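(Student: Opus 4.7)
My plan is to build an explicit blue monotone path $v_1 < v_2 < \cdots < v_n$ in $P_N^3$ via a greedy algorithm together with an amortized analysis showing the average advance is at most $7/4$ indices per step. Throughout, assume the coloring has no red cycle of length at most $5$.

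First I would establish local structure. Each window $W_i = \{i, i+1, i+2, i+3\}$ induces a $K_4$ in $P_N^3$, and since every $4$-edge subgraph of $K_4$ contains a $C_3$ or a $C_4$, at most $3$ edges within $W_i$ are red; as a triangle-free $3$-edge subgraph of $K_4$, those red edges form either a path $P_4$ or a star $K_{1,3}$. Call a vertex $v$ \emph{bad} if its three forward edges $\{v,v+1\}, \{v,v+2\}, \{v,v+3\}$ are all red (so the red edges in $W_v$ are the star $K_{1,3}$ centered at $v$). Applying the no-red-$C_3$ condition to the triangles $\{v,v+1,v+2\}$, $\{v,v+1,v+3\}$, and $\{v,v+2,v+3\}$ immediately forces all three edges among $\{v+1, v+2, v+3\}$ to be blue, giving a blue triangle just after any bad vertex. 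Analogous ``forced-blue'' statements hold when the red $K_{1,3}$ in $W_i$ is centered at $i+1$, $i+2$, or $i+3$, and these forced blue edges will power the construction.

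Next I would run a greedy path-building algorithm with one-step lookahead: starting from the smallest viable vertex, from the current endpoint $v$ take the shortest available blue forward edge, but if the candidate successor is a bad vertex, take a slightly longer step that skips it, using the blue triangle guaranteed immediately after a bad vertex. To show the algorithm reaches $n$ vertices before the index exceeds $N$, I plan a block decomposition: partition the visited indices into consecutive blocks of size at most $7$, each chosen to contain at least $4$ path vertices. Combined with the hypothesis $N \ge \tfrac{7}{4}n + 10$, this yields a blue path on at least $n$ vertices, with the additive $10$ absorbing boundary effects at the start and end.

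The main obstacle will be verifying the $4$-vertices-per-$7$-indices rate in the worst case: chains of overlapping windows in which red repeatedly forms a $K_{1,3}$, potentially forcing many consecutive length-$3$ moves together with bad vertices to skip. Here I expect the no-red-$C_5$ hypothesis to be essential, since it is invisible within any single $K_4$ window. Concretely, the stacked red configurations that would break the $4/7$ ratio, such as bad vertices at positions $v$ and $v+3$ or red $K_{1,3}$'s centered at vertices a fixed small distance apart, force a red $C_4$ or $C_5$ across the pair of adjacent $K_4$ windows once one combines the star edges with the forced-blue/forced-red companion edges in the neighboring window. A systematic case analysis of how red $K_{1,3}$'s and bad vertices can coexist in consecutive windows, ruling out exactly the $C_{\le 5}$-creating patterns, should complete the charging.
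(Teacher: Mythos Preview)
Your strategy is essentially the paper's: prove $P_N^3 \to (\mathcal{C}_{\le 5}, P_n)$ by building a blue path of density at least $4/7$ through local analysis, then stitch. The paper packages this as a main lemma: from any vertex $v$ with $\up(v)\neq RRR$ there is, within the next few indices, a blue path of density $\ge 4/7$ ending at another vertex $k$ with $\up(k)\neq RRR$; the segments are then concatenated. Your ``bad vertex'' is exactly the paper's $\up(v)=RRR$, and your blue-triangle observation is the paper's Figure~\ref{fig:up-rrr}. The corollary itself then follows by monotonicity and the edge count $|E(P_N^3)| = 3N-6$, which you should state explicitly.

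There is a genuine gap, however. The entire technical content is the case analysis you defer with ``should complete the charging,'' and the one concrete example you give is wrong as stated: bad vertices at $v$ and at $v+3$ do \emph{not} by themselves force a red $C_4$ or $C_5$ --- the six red edges $\{v,v+1\},\{v,v+2\},\{v,v+3\},\{v+3,v+4\},\{v+3,v+5\},\{v+3,v+6\}$ form a tree. What is true is that \emph{further} red edges needed to block every blue crossing between the two blue triangles would create a short red cycle; but that is precisely the kind of secondary argument that has to be carried out case by case, and the paper spends several pages doing so (seven top-level cases on $\up(0)$, with Cases~3 and~6 branching repeatedly). Your sketch does not yet demonstrate that the $4/7$ ratio survives all such configurations.

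A secondary concern: the paper deliberately avoids a one-step greedy rule. Instead of ``take the shortest blue forward edge (with lookahead),'' it exhibits an entire short segment at once, simultaneously controlling density and ensuring the new endpoint is not bad. A pure greedy rule can get trapped: the shortest blue forward edge may land at a vertex whose only usable continuation destroys the density over the block. If you pursue your route, you will likely be forced back to the paper's segment-by-segment case analysis rather than a clean amortization.
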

\begin{proof}
The first two inequalities follow from monotonicity. Let $N = \ceil{ \frac74 n + 10}$. Then \[|E(P_N^3)| = 3(N-3) + 2+1 = 3N-6 \le \frac{21}{4}n  +27.  \] 
\end{proof}

Making use of a lemma proved with a computer check (described in Section \ref{sec: comp}), we have the following improvement.

\begin{theorem}\label{thm:comp}
Let $n\ge 2$ and let $N \ge \frac{25}{19} n  + 43$. Then $P_N^3 \to (\mathcal{C}_{\le 8}, P_n)$. Thus we have the bound
\[\hat{R}(\mathcal{C}, P_n) \le \hat{R}(\mathcal{C}_{\le 8}, P_n) \le \frac{75}{19}n + O(1) < 3.947n +O(1).\]
\end{theorem}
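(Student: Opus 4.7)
The plan is to follow the strategy of Theorem~\ref{thm:main} but to replace its structural lemma about $\mathcal{C}_{\le 5}$-free colorings of short segments of $P_N^3$ by a stronger, computer-verified local lemma about $\mathcal{C}_{\le 8}$-free colorings. Forbidding red cycles up through length $8$ imposes substantially more constraints on the red subgraph than forbidding $\mathcal{C}_{\le 5}$ alone, and this forces each window of the cubed path to admit a longer traversing blue subpath. Schematically, the ``blue yield'' per window of the base path improves from the ratio $4/7$ implicit in Theorem~\ref{thm:main} to $19/25$.

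First I would state and invoke a local lemma (the object of Section~\ref{sec: comp}) of the following flavour: there exist constants $k$ and $m$ with $m/k \ge 19/25$ such that every red/blue edge coloring of $P_k^3$ containing no red cycle of length at most $8$ admits a blue path whose endpoints lie in the first and last few vertices of the segment and whose length is at least $m$. Such a statement is in principle checkable by brute force, since there are only $2^{O(k)}$ colorings of the edges of $P_k^3$; for each one, the red $\mathcal{C}_{\le 8}$ condition and the search for a suitable crossing blue path can both be tested in polynomial time, so the lemma becomes a finite case analysis suitable for a computer.

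Next I would cut $P_N$ into consecutive windows of length roughly $k$, apply the local lemma in each window to extract a blue subpath with endpoints controlled near the window boundary, and splice the subpaths across adjacent windows using the short-range blue edges (of length $1$, $2$, or $3$ in $P_N$) that join them. A careful accounting of overlaps, together with the constant-size leftover near the two ends of $P_N$, shows that the spliced blue path has at least $\frac{19}{25}(N - O(1))$ vertices, so choosing $N \ge \frac{25}{19} n + 43$ guarantees a blue $P_n$ and hence $P_N^3 \to (\mathcal{C}_{\le 8}, P_n)$. The edge bound then follows immediately from $|E(P_N^3)| = 3N - 6 \le \frac{75}{19} n + O(1) < 3.947 n + O(1)$.

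The main obstacle is the interaction between the local lemma and the splicing step. The lemma must not merely guarantee \emph{some} long blue subpath in each window but one with endpoints in sufficiently flexible positions so that it can be joined to the corresponding subpaths in the neighbouring windows via genuinely blue edges of $P_N^3$, without creating repeated vertices and without relying on global information unavailable to a local check. Encoding enough boundary flexibility to make the matching always possible, while keeping $k$ small enough for the computer search to terminate at the desired ratio $m/k = 19/25$, is the delicate part of the argument and is likely what is responsible for the comparatively large additive constant $43$.
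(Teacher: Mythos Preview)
Your high-level strategy matches the paper's: a computer-verified local lemma about $\mathcal{C}_{\le 8}$-free colorings of a bounded segment of the cubed path, then iterate to assemble a long blue path. However, your formulation of the local lemma and of the stitching step differs from the paper's in a way that matters. You propose fixed-length windows and crossing blue paths with endpoints ``in the first and last few vertices,'' to be spliced by short blue edges between adjacent windows; you correctly flag this splicing as the main obstacle, since those joining edges need not be blue and the endpoints need not align.

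The paper sidesteps this obstacle entirely. Its local lemma (Lemma~\ref{lem:comp}) is stated on $P_{43}^3$ with the hypothesis $\up(0)\notin\{RRR,RRB\}$, and it produces a blue path that starts \emph{exactly} at vertex $0$, ends at some \emph{variable} $k\in\{1,\ldots,39\}$ satisfying $\up(k)\notin\{RRR,RRB\}$, and has density $r(P_B)\ge 19/25$. One then re-applies the lemma starting at $k$: the new segment's vertex $0$ \emph{is} the old segment's vertex $k$, so the two blue paths share an endpoint and concatenate directly via Observation~\ref{obs:stitch}. No splicing edges are ever needed, no boundary flexibility has to be engineered, and there is no risk of vertex repetition. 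The condition $\up(k)\notin\{RRR,RRB\}$ is exactly the hypothesis required to restart (and one checks that among vertices $1,2$ at least one satisfies it, so the iteration can begin). The constant $43$ is simply the window size used in the computer search, not an artifact of delicate boundary matching. In short, the ``main obstacle'' you identify is absent from the paper's argument precisely because of the variable-length, self-restarting form of the local lemma; your fixed-window version would require substantial extra work that the paper's formulation avoids.
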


We remark that one interesting fact about $P_N^3$ is that it is a maximal planar graph and is in fact an \emph{Apollonian network}. That is, it can be drawn by starting with a triangle in the plane and then repeatedly adding a new vertex inside of a current face and connecting it to each vertex of the containing face. Such a planar drawing is shown in Figure \ref{fig:apol}. 
\begin{figure}[h]
\centering
\includegraphics[scale=.6]{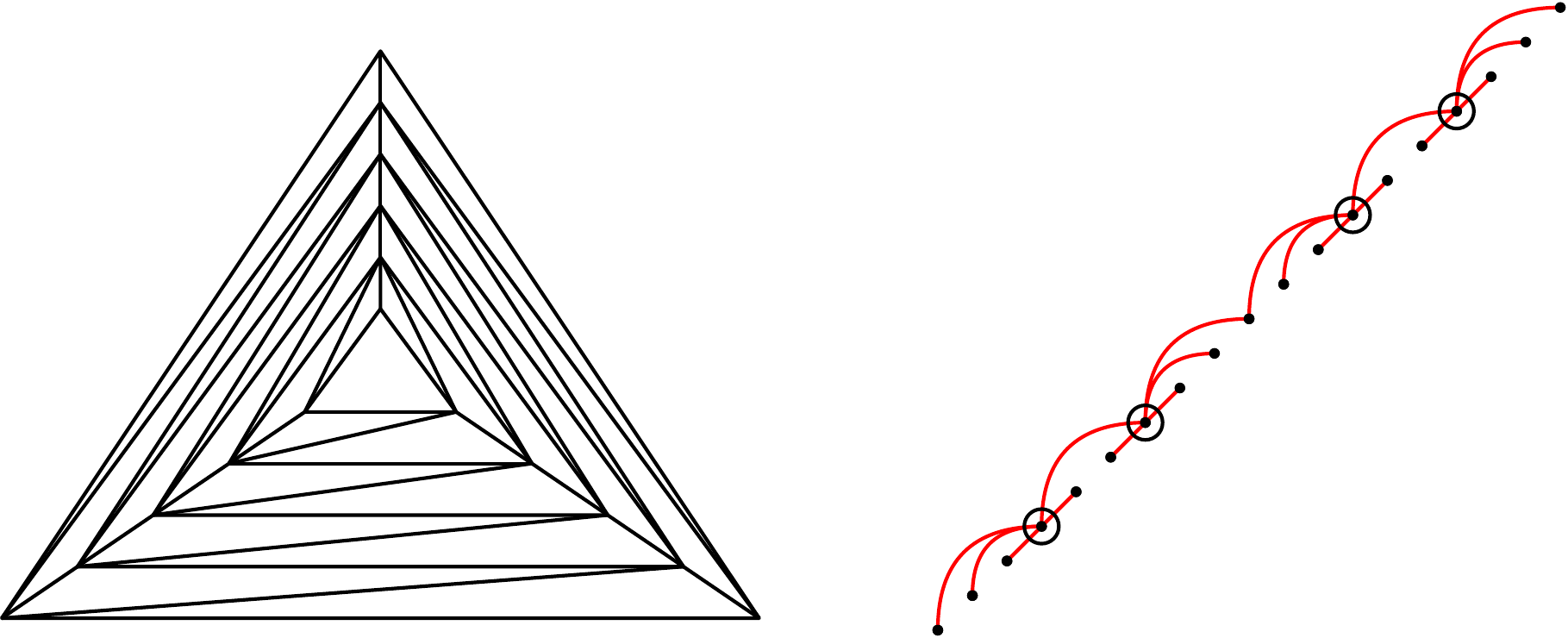}
\caption{On the left is a planar drawing of $P_N^3$. On the right is a spanning tree of $P_N^3$ whose removal leaves behind a path of density $\sim 7/9$.}\label{fig:apol}
\end{figure}

In this paper we also consider the lower bound. By improving upon the ideas in \cite{DKP}, we prove the following theorem.

\begin{theorem}\label{thm:lb}
Suppose $n$ is sufficiently large and $G$ is a graph with at most $\of{2+\frac{43}{651}}n - O(1)$ edges. Then there exists a red/blue coloring of $E(G)$ such that the red graph is acyclic and the blue graph contains no path of order $n$. Thus
\[2.066n <  \of{2+\frac{43}{651}}n - O(1) \le  \hat{R}(\mathcal{C}, P_n) \]
\end{theorem}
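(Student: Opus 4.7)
The plan is to refine the approach of Dudek, Khoeini, and Pra{\l}at via a more careful analysis of low-degree vertices in $G$. The task is: given $G$ with $m \le (2+43/651)n - O(1)$ edges, exhibit a forest $R \subseteq E(G)$ such that $G - R$ contains no $P_n$, which rules out $G \to (\mathcal{C}, P_n)$.

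First, I would iteratively peel off pendant vertices: for each leaf $v$ with neighbor $u$, include the edge $uv$ in $R$. This creates no red cycle, and $v$ becomes isolated in the blue graph, so $v$ cannot appear on any blue $P_n$. Repeating, we reduce to the subcase where $G$ has minimum degree at least $2$. Next, process degree-$2$ vertices: each such vertex lies on a chain between two branching vertices (or inside an isolated cycle, handled separately). For a chain of length $\ell$, we can route $\ell - 1$ of its edges into $R$ without creating a red cycle, so that at most one chain edge survives in blue. This sets up a bookkeeping in which each chain contributes a specific, computable amount both to $|R|$ and to the potential length of a blue $P_n$.

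After peeling and chain-processing, we arrive at a ``core'' subgraph $G^*$ of minimum degree at least $3$, satisfying $|V(G^*)| \le \tfrac{2}{3}|E(G^*)|$. On the core we must color edges so that the remaining blue subgraph---consisting of the surviving chain edges together with the blue core edges---contains no $P_n$. The maximum blue path length can be bounded by a linear combination of $|E(G^*)|$, the number of chains, and the individual chain lengths. Combining this with the global edge count of $G$ and the forest constraint on $R$ yields a linear inequality in these structural parameters. Optimizing over all feasible configurations (i.e., compatible degree sequences of $G^*$ together with chain-length distributions) produces the lower bound $m \ge (2+43/651)n - O(1)$.

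The main obstacle, and the source of the specific rational $43/651$, is the core analysis. The DKP bound of $2.00365n$ presumably exploits only the most elementary savings (degree-$1$ peeling plus a little more), whereas the improvement to $(2+43/651)n$ requires a genuinely new bound on how edges inside a minimum-degree-$3$ core can be colored. The extremal case is expected to correspond to a cubic core with a particular local structure at the branching vertices (likely governed by short cycles or a girth condition), and the precise constant $43/651$ emerges as the optimum of the resulting linear program. Care is required in keeping the $O(1)$ terms tight when patching together the chain analysis and the core analysis, especially when the peeling procedure runs out partway and leaves only a small number of vertices outside the core.
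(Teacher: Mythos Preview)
Your outline stops exactly where the real work begins, and the guesses you make about that work are not on target. After reducing to minimum degree $3$ (which the paper also does, via a lemma from \cite{BD}), you say only that the blue path length ``can be bounded by a linear combination'' of core parameters and that optimizing ``the resulting linear program'' yields $43/651$. But you never specify \emph{which} forest to take on the core, and that choice is the entire content of the argument. The trivial spanning-tree coloring gives only the $2n$ bound; getting anything beyond it requires a forest that renders many vertices useless for blue paths, and you have not proposed a mechanism for producing such a forest.

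The paper's mechanism is quite different from what you anticipate. There is no chain analysis and the extremal configuration is not a cubic core. Instead, one first strips off the set $B$ of vertices of degree at least $d+1$ (taking $d=5$), bounding $|B|$ via a degree-sum count that uses the existence of two edge-disjoint $P_n$'s in $G$. On the bounded-degree remainder $G[V\setminus B]$ one then runs a two-phase greedy procedure to build a forest $F$ together with an independent set $A_0\cup A_1$ such that every $v\in A_0$ has all its edges in $F$ and every $v\in A_1$ has all but one; the key output is the quantitative bound $|A_0|+\tfrac12|A_1|\ge \gamma_d\,|V\setminus B|$ with $\gamma_d=\tfrac{1}{d^2+d+2}+\tfrac{3}{2(d^2+2d+3)}$. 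A blue path can visit $A_0\cup A_1$ only by bouncing through $B$, which converts the bound on $|A_0|+\tfrac12|A_1|$ into a bound on blue path length. The constant $43/651$ is exactly $\gamma_5/(1+\gamma_5)$ with $\gamma_5=43/608$; it does not arise from any girth or short-cycle consideration, and the optimal threshold $d=5$ (not $d=3$) is found by comparing the values $d=4,5,6$ by hand. Your proposal contains none of these ingredients.
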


\section{Proof Idea and Notation}

\subsection{Upper Bound}\label{sec:ub}
Given an integer vertex set $[N] =  \set{1,2,\ldots, N}$, we call the path with  $i\sim (i+1)$ for all $i=1,\ldots, N-1$ a \emph{base path}.
Let $N \ge \frac74 n  +10$ and let $P:= P_N$ be the base path on vertex set $[N]$. Define $G: = P_N^3$. We will prove that every red/blue coloring of $E(G)$ with no red $C_3,C_4$ or $C_5$ contains a blue path of order at least $n$.

Suppose $Q$ is a base path on vertex set $\set{0,1,\ldots, \ell}$ and $H=  Q^3.$ The \emph{density} of a path $P$ in $H$ with endpoint $0$ is defined as
\[r(P) := \frac{|V(P) \cap \set{1,2,\ldots, \ell}|}{\ell}.\]
The following observation shows that one can ``stitch together'' paths while maintaining the density of the longer path.
\begin{observation}\label{obs:stitch}
Suppose $Q$ is a base path on vertex set $\set{0,1,\ldots, k, k+1,\ldots, k+\ell}$ and $H=Q^3$. Suppose that $P_1$ is a path in $H[\set{0,1,\ldots, k}]$ with endpoints 0 and $k$ and $r(P_1) = d_1$, and that $P_2$ is a path in $H[ \set{k, k+1, \ldots,k+\ell }  ]$ with endpoints $k$ and $k+\ell$ and $r(P_2) = d_2$. Then $P_1\cup P_2$ is a path in $H$ with endpoints 0 and $k+\ell$ and $r(P_1\cup P_2) \ge \min\set{d_1, d_2}$. 
\end{observation}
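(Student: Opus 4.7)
The plan is to treat this as a routine bookkeeping step: first verify that $P_1 \cup P_2$ is indeed a path in $H$, then express its density as a convex combination of $d_1$ and $d_2$.

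First I would observe that $V(P_1) \subseteq \{0,1,\ldots,k\}$ and $V(P_2) \subseteq \{k,k+1,\ldots,k+\ell\}$, so these vertex sets meet only at the shared endpoint $k$. Hence concatenating $P_1$ and $P_2$ at $k$ yields a walk with no repeated vertices, i.e., a path from $0$ to $k+\ell$, whose edges lie in $H[\{0,\ldots,k\}] \cup H[\{k,\ldots,k+\ell\}] \subseteq H$.

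Next I would translate the two density hypotheses into vertex counts. The definition of $r(\cdot)$ is given for a base path on $\{0,1,\ldots,\ell\}$ with left endpoint $0$, so for $P_2$ I would apply the reindexing $k+i \mapsto i$: this identifies $H[\{k,k+1,\ldots,k+\ell\}]$ with the cube of a base path on $\{0,1,\ldots,\ell\}$, with $k$ playing the role of the endpoint $0$. Under this identification $r(P_2) = d_2$ says exactly that $|V(P_2) \cap \{k+1,\ldots,k+\ell\}| = d_2 \ell$. Since $k \in V(P_1)$ is already counted in $|V(P_1) \cap \{1,\ldots,k\}| = d_1 k$, there is no double counting, and hence
$$|V(P_1 \cup P_2) \cap \{1,\ldots,k+\ell\}| = d_1 k + d_2 \ell.$$

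Dividing by $k + \ell$ then gives
$$r(P_1 \cup P_2) = \frac{d_1 k + d_2 \ell}{k+\ell} \ge \min\{d_1, d_2\},$$
since a convex combination of two numbers is at least their minimum. There is no real obstacle in this observation — the only point that deserves a moment of care is the reindexing used to interpret the density of $P_2$, together with the check that vertex $k$ is counted exactly once when merging the two vertex sets.
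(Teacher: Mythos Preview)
Your proof is correct and follows essentially the same approach as the paper's: both concatenate the two paths at the shared vertex $k$, compute $r(P_1\cup P_2) = \frac{d_1 k + d_2 \ell}{k+\ell}$, and bound this convex combination below by $\min\{d_1,d_2\}$. You are simply a bit more explicit about the reindexing needed to interpret $r(P_2)$ and about why no vertex is double-counted.
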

\begin{proof}
The fact that $P_1 \cup P_2$ forms a path in $H$ is obvious.  For the density, suppose $\hat{d} = \min\set{d_1,d_2}$. Then we have
\begin{align*} r(P_1\cup P_2) &=  \frac{    |  V(P_1\cup P_2) \cap \set{1,2,\ldots, k+\ell}|}{k+\ell}\\
&=  \frac{ |V(P_1)\cap \set{1,\ldots,k}|  + | V(P_2)\cap \set{k+1,\ldots, k+\ell } |   }{k+\ell}\\
&=\frac{d_1k +d_2\ell}{k+\ell} \ge \hat d.
\end{align*}

\end{proof}

Throughout the paper, we will make use of the underlying order of the vertex set of $G = P_N^3$.  Each vertex of $G$ in $\set{4,5,\ldots, N-3}$ has exactly 6 neighbors: $v\pm i$ where $i\in[3]$. For each vertex $v\in[N-3]$, we refer to the neighbors $v+i$, $i\in[3]$ as the \emph{up-neighbors} of $v$.
Given a red/blue (or $R$/$B$ for short) coloring of $E(G)$, for each vertex $v \in [N-3]$, we may associate an element of $\set{R,B}^3$ (i.e. a string of length 3 with entries from $\set{R,B}$) representing the colors assigned to the edges between $v$ and its up-neighbors.
We use the notation $\up(v)=c_1c_2c_3$ to mean that the edges $\set{v,v+1}, \set{v,v+2}, \set{v,v+3}$  are colored with $c_1, c_2, c_3$ respectively. As an illustration of this notation, we highlight one fact which we will use repeatedly without mention. If $G$ contains no red cycles, and  $\up(v) = RRR$, then vertices $v+1, v+2, v+3$ form a blue triangle (else there would be a red $C_3$). See Figure \ref{fig:up-rrr}.

\begin{figure}[h]
\centering
\includegraphics[scale=1]{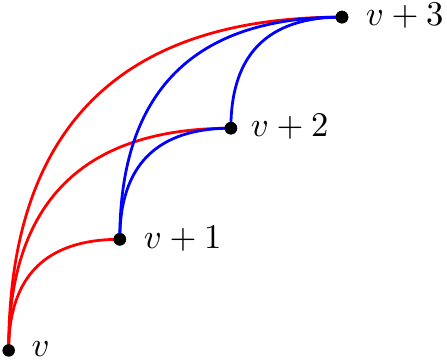}
\caption{Blue $C_3$ when $\up(v) = RRR$.}
\label{fig:up-rrr}
\end{figure}

The main idea of the proof is to suppose that $G$ has been $R/B$ colored such that there is no red cycle of length at most $5$ and to show that in this case, there must be a blue path of order at least $n$. We will find the long blue path by showing that starting at any vertex $v$ with $\up(v)\neq RRR$, one can find a blue path of density at least $4/7$ in the next 10 consecutive vertices with endpoints $v$ and $w$ where $\up(w)\neq RRR$. These short high density blue paths can then be stitched together as in Observation \ref{obs:stitch} to form the long blue path. The following lemma which is the main ingredient in our proof of Theorem \ref{thm:main}, says that the short high density paths can always be found.

\begin{lemma}\label{lem:mainlem}
Let $Q = P_{11}$ on vertex set $\set{0,1,\ldots 10}$ and let $H = Q^3$. Suppose that $H$ has been 2-colored with no red cycles from $\mathcal{C}_{\le5}$. Further suppose that in $H$, $\up(0)$ contains at least one $B$. Then there is a $k\in \set{1,\ldots 9}$ such that $H[\set{0,\ldots, k}]$ contains a blue path $P_B$ with endpoints $0$ and $k$ such that $up(k)$ contains at least one $B$ and
\[r(P_B):=\frac{\abs{V(P_B)\cap \set{1,\ldots, k}}}{k} \ge \frac{4}{7}.\]
\end{lemma}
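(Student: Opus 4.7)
My plan is to prove the lemma by an explicit case analysis on $\up(0)$---seven possibilities, since $\up(0) \ne RRR$---with nested sub-cases on $\up(1)$, $\up(2)$, and occasionally further vertices. Three structural observations, each an immediate consequence of forbidding short red cycles, will drive the argument. First, if $\up(v) = RRR$ then the set $\set{v+1, v+2, v+3}$ induces a blue triangle in $H$: for each pair among these three vertices, the triangle on $v$ together with that pair has both incident edges at $v$ red, so no-red-$C_3$ forces the opposite edge blue. Second, we cannot have $\up(v) = \up(v+1) = RRR$ simultaneously, since the $4$-cycle $v - (v+2) - (v+1) - (v+3) - v$ would then be entirely red. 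Third, we cannot have $\up(v) = \up(v+2) = RRR$: the first observation forces $\set{v+2, v+3}$ blue, whereas the first entry of $\up(v+2) = RRR$ would make it red.

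Equipped with these, the easy branches resolve in one step. If $\set{0,1}$ is blue and $\up(1) \ne RRR$, take $P_B = 0 \to 1$ with $k = 1$ (density $1$). If $\up(0) \in \set{BBB, BBR}$ and $\up(1) = RRR$, then the first observation makes $\set{2,3}$ blue, so $P_B = 0 \to 2 \to 3$ gives $k = 3$ and density $2/3 \ge 4/7$, while the third observation (applied with $v=1$) forces $\up(3) \ne RRR$. The pattern $\up(0) = BRR$ paired with $\up(1) = RRR$ produces the red $4$-cycle $0 - 2 - 1 - 3 - 0$, so that combination is infeasible and $k = 1$ works whenever $\up(0) = BRR$.

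The delicate branches are those where only one up-edge at $0$ is blue and $\up(1) = RRR$ (or symmetrically where the obstruction first appears at $\up(2)$ when $\up(0) \in \set{RBB, RBR, RRB}$). For example, when $\up(0) = BRB$ and $\up(1) = RRR$, the first observation yields the blue triangle on $\set{2,3,4}$, and $\set{0,3}$ is blue, so $P_B = 0 \to 3 \to 2 \to 4$ attains $k = 4$ with density $3/4$, provided $\up(4) \ne RRR$. If $\up(4) = RRR$ too, a second blue triangle appears on $\set{5,6,7}$, and the absence of red $C_4$ and $C_5$ forces supporting edges (for instance $\set{2,5}$ and $\set{3,6}$, to kill red $4$-cycles such as $1-4-5-2-1$) to be blue, so we can extend to a dense path ending at a non-$RRR$ vertex $k \le 9$. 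The cases $\up(0) \in \set{RBB, RBR, RRB}$ are handled by symmetric routings starting with $0 \to 2$ or $0 \to 3$, using the blue edges among $\set{1,2,3}$ that no-red-$C_3$ forces on the triangles $\set{0,1,2}$ or $\set{0,1,3}$.

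The principal obstacle is combinatorial bookkeeping: across the seven starting patterns, each admitting several possibilities for $\up(1), \up(2), \ldots$, we must verify in every branch that some blue path from $0$ to $k \le 9$ of density at least $4/7$ ends at a vertex with $\up(k) \ne RRR$. The constant $4/7$ is exactly the worst-case density that arises in branches where $P_B$ must ``skip past'' an $RRR$ block via a forced blue triangle on $\set{v+1, v+2, v+3}$. The second and third observations keep the analysis finite: $RRR$ vertices cannot cluster, so after every $RRR$ block we are guaranteed a nearby vertex with at least one blue up-edge at which to terminate the short dense path.
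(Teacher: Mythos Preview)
Your plan follows exactly the paper's case-analysis strategy, and your three structural observations are correct and are the implicit workhorses of the paper's argument. Your handling of $\up(0)\in\set{BRR,BBR,BBB,BRB}$ matches the paper's treatment of those cases.

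The gap is in your one-line dismissal of $\up(0)\in\set{RBR,RBB,RRB}$ as ``symmetric routings.'' There is no symmetry to invoke here: $\up(\cdot)$ is directional, and $RBR$, $RBB$, $RRB$ are not interchangeable with one another or with the $B{*}{*}$ cases you already covered. The paper explicitly flags $RRB$ and $RBB$ (its Cases~3 and~6) as ``much more involved than the other cases''; each requires several layers of nested sub-cases on the colours of edges such as $\set{2,3}$, $\set{1,4}$, $\set{2,4}$, $\set{3,5}$ and on whether $\up(4)$, $\up(5)$, or even $\up(7)$ equals $RRR$. Your second and third observations only rule out $RRR$ at distances~$1$ and~$2$, so $\up(4)=RRR$ followed by $\up(7)=RRR$ is still possible and does occur in the analysis, forcing paths out to $k=9$. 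Moreover, the constant $4/7$ is \emph{attained} (not merely a lower bound) precisely in sub-branches of these two hard cases---for instance the path $0\,3\,5\,6\,7$ with $k=7$ in the $RRB$ analysis, and $0\,3\,4\,6\,7$ in the $RBB$ analysis---so these branches cannot be shortcut. The paper's proof of Cases~3 and~6 occupies roughly two pages of explicit routing; your proposal needs to carry this out rather than assert it by appeal to a nonexistent symmetry.
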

With this lemma in hand (proved in Section \ref{sec:mainlempf}), we can prove the main theorem.

\begin{proof}[Proof of Theorem \ref{thm:main}]
Let $N\ge \frac74n + 10$, let $G=P_N^3$ and suppose that $E(G)$ has been 2-colored with red and blue such that there is no red cycle from $\mathcal{C}_{\le 5}$. It cannot be the case that vertices 1 and 2 both have 3 red up-neighbors. Hence we may apply Lemma \ref{lem:mainlem} starting at one of these vertices. We then repeatedly apply Lemma 2.2 to find an extension of the current blue path to another with density at least $4/7$ (by Observation \ref{obs:stitch}). We continue extending the blue path until we have found one, $P_B$, whose endpoint lies in $\set{N-9, \ldots, N }$ (if the last blue endpoint is smaller than $N-9$, then Lemma \ref{lem:mainlem} can be applied again). Then since $r(P_B) \ge 4/7$, we have
\[|V(P_B)| \ge \frac{4}{7}\cdot (N-11) +1 \ge n    \]
where we have used $N-11$ since $P_B$ may start at vertex 2 and the additional 1 accounts for the very first vertex of $P_B$. 
\end{proof}

The largest blue path density one could hope for in $P_N^3$ is $7/9$ since we may color the edges red in a repeating pattern as  indicated by Figure \ref{fig:apol} . At most 2 of the circled vertices may be used in a blue path (as endpoints) since they would have blue degree 1. Thus we have the following.

\begin{observation}
 The best upper bound that one could ever prove using the cube of a path is $\hat{R}(\mathcal{C}, P_n)\le\frac97n\cdot 3+O(1) \approx 3.857n+O(1)$ .
\end{observation}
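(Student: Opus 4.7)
The plan is to exhibit, for each $N$, an explicit red/blue coloring of $E(P_N^3)$ in which the red subgraph is acyclic (in fact a spanning tree) and every blue path has at most $\tfrac{7}{9}N + O(1)$ vertices. Once such a coloring is in hand, whenever $\tfrac{7}{9}N + O(1) < n$ it witnesses $P_N^3 \not\to (\mathcal{C}, P_n)$, so any proof that $P_N^3 \to (\mathcal{C}, P_n)$ must have $N \ge \tfrac{9}{7}n - O(1)$, giving
\[ |E(P_N^3)| = 3N - 6 \ge \tfrac{27}{7}n - O(1) \approx 3.857n, \]
which is precisely the stated claim.

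The red spanning tree $T$ is defined periodically with period $9$ along the base path, matching the pattern in Figure \ref{fig:apol}. Within each block of $9$ consecutive vertices, two distinguished \emph{circled} vertices will have all but one of their $P_N^3$-incident edges colored red, so that each ends up with blue-degree exactly $1$; the other seven vertices in the block will have blue-degree at least $2$. First I would write out the red edge set explicitly on a fundamental domain of length $9$ (there are roughly $27$ edges per period, of which $8$ must be red so that the total count matches $N-1$), then verify by induction along blocks that these edges span and connect all vertices, so that $T$ is indeed a spanning tree of $P_N^3$ and hence contains no cycle.

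With the construction fixed, the path-length bound follows from a short degree argument. Any internal vertex of a path has degree at least $2$ in the path and hence blue-degree at least $2$ in the coloring. So a circled vertex, having blue-degree $1$, can occur in a blue path only as an endpoint, and every blue path contains at most two such vertices. Since the circled vertices account for $\tfrac{2}{9}N - O(1)$ of the total, any blue path uses at most $\bigl(N - \tfrac{2}{9}N\bigr) + 2 = \tfrac{7}{9}N + 2$ vertices, as required.

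The hard part will be the bookkeeping: specifying the periodic red pattern precisely, checking that the resulting red subgraph remains connected and cycle-free across block boundaries, and confirming that exactly the circled vertices have blue-degree $1$. Both checks reduce to a finite case analysis on the residue classes $\bmod\,9$, essentially visible from Figure \ref{fig:apol}. The only additional subtlety is a mild boundary adjustment at the two ends of $P_N^3$, where the periodic pattern is truncated; this perturbs the vertex and edge counts by an $O(1)$ amount, which is absorbed into the $O(1)$ error in the final bound.
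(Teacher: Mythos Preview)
Your proposal is correct and follows essentially the same approach as the paper: exhibit a periodic red spanning tree of $P_N^3$ (the one depicted on the right in Figure~\ref{fig:apol}) in which two out of every nine vertices have blue-degree~$1$, so that any blue path can use at most two such vertices and hence has order at most $\tfrac{7}{9}N + O(1)$. The paper states this argument tersely by reference to the figure, while you spell out the verification steps more carefully (the one minor slip is that a spanning tree contributes about $9$ red edges per period of $9$, not $8$), but the idea is identical.
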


\subsection{Lower Bound}
In order to improve the lower bound, we must show that every graph $G$ with at most $(2+\alpha)n$ edges contains a forest whose removal destroys all the paths of order $n$. One approach to accomplish this is to find a forest which contains many vertices of full degree (that is, vertices with the same degree in the forest as in the graph $G$). Such full degree vertices cannot be used in a blue path. This is the approach taken in \cite{DKP}. One snag is that it is not so simple to find such forests in graphs with unbounded degree. The proof of Theorem \ref{thm:lb} shows how to deal with high degree vertices and also gives an improved approach for bounded degree graphs than the one in \cite{DKP}.

\subsection{Notation and outline}
We use $N(v)$ to refer to the open neighborhood of vertex $v$. For two subsets $X,Y$ of vertices, we use $e(X,Y)$ to represent the number of edges with one endpoint in $X$ and one in $Y$. 
In Section 3, we deal with  a graph on vertex set $\set{0,1,\ldots 10}$ and since we do not refer to vertex 10 in the proof, we choose to omit commas when naming paths and cycles. For example the path $(0,1,3,4)$ will be denoted by $0134$ and the cycle on those same vertices will be denoted $(0134)$.

In Section \ref{sec:mainlempf} we prove Lemma \ref{lem:mainlem}. In Section \ref{sec: comp} we briefly describe the computer assisted improvement to Lemma \ref{lem:mainlem} which implies Theorem \ref{thm:comp}. In Section \ref{sec:lb} we prove Theorem \ref{thm:lb}.

\section{Proof of Main Lemma}\label{sec:mainlempf}
\subsection{A warm-up: density $1/3$}
In this subsection, to give a flavor of the proof to come, we prove a version of Lemma \ref{lem:mainlem}, replacing $4/7$ with $1/3$. We split into 7 cases depending on $\up(0)$. Note by assumption, we do not consider the case $\up(0) = RRR$. Our goal in each case is to find a blue path $P_B$ with density $r(P_B)\ge 1/3$ such that the non-$0$ endpoint, $k$, has a blue up-neighbor. We also note that at the end of this short warm-up  we will already have proved that $P_{3n+5}^3\to (\mathcal{C}, P_n)$ which implies
$\hat{R}(\mathcal{C}, P_n)\le 9n + O(1)$, a decent improvement over the previously known upper bound of $31n$.
\begin{itemize}
\item \tbf{Case 1} ($\up(0) = BRR$). 
If $\up(1) = RRR$, then edges 
$02, 03, 12$ and $13$ are all red and so  $(0213)$ would form a red $C_4$, a contradiction. Thus $\up(1)$ must contain at least one $B$ and we can take $P_B = 01$ which satisfies $r(P_B)=1$.

\item \tbf{Case 2} ($\up(0) = RBR$). 
If $\up(2)$ contains a $B$, then we could take $P_B = 02$ which has $r(P_B) = 1/2$. Otherwise we can assume $\up(2) = RRR$.  In this case edge $12$ must be blue, otherwise $(0123)$ forms a red cycle. Edge $13$ must be blue, otherwise $(013)$ forms a red cycle. Thus we may take $P_B=  0213$ which has $r(P_B) = 1$. Note that $\up(3)$ contains a $B$ as depicted in Figure \ref{fig:up-rrr} (with $v=2$).
\item \tbf{Case 3} ($\up(0) = RRB$). If $\up(3)$ contains a $B$, then we can take $P_B = 03$. Otherwise we can assume $\up(3) = RRR.$ If edge $23$ is red, then the red graph on vertices $\set{0,1,2,3,4,5,6}$ forms a tree, and so any uncolored edges must be blue. So in this case we may take $P_B = 0314$. Else we may suppose that edge $23$ is blue. One of the edges $24$ or $25$ must be blue, otherwise $(2435)$ is a red cycle. So then we can take $P_B = 0324$ or $P_B =0325$.
  
\item \tbf{Case 4}  ($\up(0) = BBR$).
One of the edges $12, 23$ or  $13$ must be blue, otherwise $(123)$ is a red cycle. So then we can take $P_B = 01$ or $P_B = 02$. 
\item \tbf{Case 5}  ($\up(0) = BRB$). One of the edges $13, 14$ or $34$ must be blue, otherwise $(134)$ is a red cycle. So then we can take $P_B = 01$ or $P_B = 03$.
\item \tbf{Case 6} ($\up(0) = RBB$). One of the edges $23, 24$ or $34$ must be blue, otherwise $(234)$ is a red cycle. So then we can take $P_B = 02$ or $P_B=03$.
\item \tbf{Case 7} ($\up(0) = BBB$). One of the edges $12, 23$ or  $13$ must be blue, otherwise $(123)$ is a red cycle. So then we can take $P_B = 01$ or $P_B = 02$. 
\end{itemize}

\subsection{Proof of Lemma \ref{lem:mainlem}: density $4/7$}
\begin{proof}[Proof of Lemma \ref{lem:mainlem}]
 The proof is essentially a more intricate version of the one that appears above. Cases 3 and 6 are much more involved than the other cases so the reader may wish to read those last. 
We provide \texttt{python} code at the url \url{http://msuweb.montclair.edu/~bald/research.html} which can help with the verification of this proof.
\begin{itemize}
\item \tbf{Case 1} ($\up(0) = BRR$)

If $\up(1) = RRR$, then edges 
$02, 03, 12$ and $13$ are all red and so  $(0213)$ would form a red $C_4$, a contradiction. Thus $\up(1)$ must contain at least one $B$ and we can take $P_B = 01$ which satisfies $r(P_B)=1$.

\item \tbf{Case 2} ($\up(0) = RBR$)

Suppose edge $12$ is red. Then $023$ is a blue path since edge $23$ must be blue  (else $(0123)$ is a red cycle).  If $\up(3) = RRR$, then  edge $14$ must be blue (else $(0143)$ is a red cycle), and so we can take $P_B = 02314$ since $4$ has blue up-neighbors 5 and 6 and $r(P_B) = 1.$ Otherwise $\up(3)$ contains a $B$ and we can take  $P_B = 023$ which satisfies $r(P_B) = 2/3.$

Now, suppose edge $12$ is blue. In this case, $0213$ is a blue path (edge $13$ must be blue otherwise $(013)$ is a red cycle). If $\up(3)$ contains a $B$, then we may take $P_B = 0213$. Otherwise $\up(3) = RRR$. In this case, edges $14$, $45$ and $56$ are all blue. Thus we can take $P_B = 02145$ where $r(P_B) = 4/5$.


\item\tbf{Case 3}  ($\up(0) = RRB$)  

Suppose edge $23$ is red.  Then edge $13$ must be blue (else $(0132)$ is a red cycle) and so $031$ is a blue path. 

\begin{itemize}
\item[] If edge $14$ were red, then edge $24$ must be  blue (else $(0142)$ is a red cycle) and so $03124$ is a blue path. If $\up(4)$ contains a $B$, then we may take $P_B = 03124$.  If $\up(4)=RRR$, then $320145$ is a red path, and so any other edge among these vertices must be blue.  Thus we may take $P_B = 03425$ since vertex $6$ is a blue up-neighbor of vertex $5$ and $r(P_B) = 4/5$.
\item[]  If edge $14$ were blue, then $0314$ is a blue path.
 If $\up(4)$ contains a $B$, then we may take $P_B = 0314$ which has $r(P_B) = 3/4$. 
Otherwise, suppose $\up(4) = RRR$ (which recall implies that vertices $5,6$ and $7$ form a blue triangle). 
\begin{itemize}
\item[] If edge $24$ were red, then edges $34$ and $25$ must be blue (else we have red cycles $(234)$ or $(245)$ respectively). Thus we may take $P_B = 034125$ since vertex $6$ is a blue up-neighbor of vertex $5$. 
\item[] So we assume edge $24$ is blue. If edge $35$ is red then edge $25$ must be blue (else $(235)$ is a red cycle). Thus we may again take $P_B= 031425$. So assume that edge $35$ is blue.  In this case, we have $03567$ is a blue path.  Now if $\up(7)$ contains a $B$, when we may take $P_B = 03567$ which has $r(P_B) = 4/7$ (this specific case is illustrated in Figure \ref{fig:rrb} just as an example). Otherwise if $\up(7) = RRR$, then edge 68 is blue (else $(4687)$ is a red $C_4$).  In this case we may take $P_B = 0357689$ which has $r(P_B) = 2/3$.
\begin{figure}[h]
\centering
\includegraphics[scale=.7]{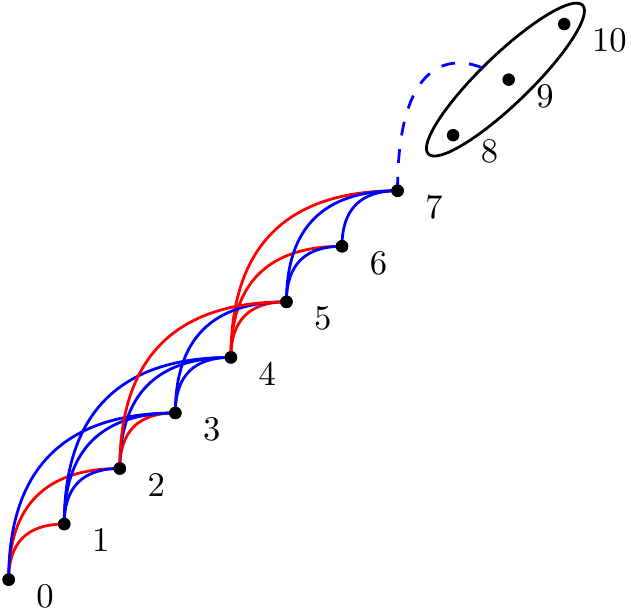}
\caption{An illustration of the situation when the proof has led us to the assumptions  $\up(0) = RRB$, edge $23$ is red, 14 is blue, $\up(4)=RRR$, 24 is blue, 35 is blue and $\up(7)$ contains a $B$. In this case, we take $P_B = 03567$ which has $r(P_B) = 4/7$.}
\label{fig:rrb}
\end{figure}

\end{itemize}

\end{itemize}

Now we assume edge $23$ is blue. Then $0321$ forms a blue path.  
\begin{itemize}
\item[] Suppose edge $14$ is red. Then edge $24$ is blue (else $(0142)$ is a red cycle) and so $0324$ is a blue path. If $\up(4)$ contains a $B$, then we may take $P_B = 0324$. So suppose that  $\up(4) = RRR$. Then $(567)$ is a blue triangle and edge $25$ must be blue (else $(01452)$ is a red cycle) and so we may take $P_B = 03256$ which has $r(P_B) = 2/3$. 

\item[] Now suppose edge $14$ is blue. If $\up(4)$ contains  a $B$, then we may take $P_B=03214$. Else suppose $\up(4)=RRR$ so that $(567)$ forms a blue triangle. If edge $25$ is blue, then we may take  $P_B = 03256$ which has $r(P_B) = 2/3$. So suppose edge $25$ is red.

\begin{itemize}
\item[]  If edge $35$ is blue, then 03567 is blue path. If $\up(7)$ contains a $B$, then we may take $P_B = 03567$ with $r(P_B) = 4/7$. Otherwise suppose $\up(7) = RRR$. Then we may take $P_B = 0357689$ which has $r(P_B) = 2/3$.

\item[]  So suppose edge  $35$ is red. Then edge $36$ is blue (else $(3546)$ is a red $C_4$). So $03657$ is a blue path. If $\up(7)$ contains a $B$, then we take $P_B = 03657$  with $r(P_B)=4/7$. Otherwise suppose $\up(7)=RRR$ and so  edge $58$ is blue (else $(4578)$ is a red $C_4$). So we may take  $P_B = 0367589$ which has $r(P_B) =2/3$. 

\end{itemize}

 \end{itemize}

\item\tbf{Case 4}  ($\up(0) = BBR$) 

If $\up(1)$ contains a $B$, then we may take $P_B = 01$. Otherwise suppose $\up(1) = RRR$. In this case, $(234)$ is a blue $C_3$ and so we may take $P_B = 023$ which has $r(P_B) = 2/3.$

\item\tbf{Case 5}  ($\up(0) = BRB$)

If $\up(1)$ contains a $B$, then we may take $P_B = 01$. Otherwise suppose $\up(1)=RRR$ so that $(234)$ is a blue $C_3$. Then $0324$ is a blue path. If $\up(4)$ contains a $B$, then we may take $P_B = 0324$ which has $r(P_B) = 3/4$. Otherwise suppose $\up(4) = RRR$ so that $(567)$ is a blue $C_3$ and so that edge $25$ is blue (else $(1254)$ is a red $C_4$). Then we may take $P_B= 034256$ which has $r(P_B) = 5/6$.

\item\tbf{Case 6}  ($\up(0) = RBB$)  

Suppose edge $23$ is blue. If $\up(3)$ contains a $B$, then we may take $P_B=023$ which has $r(P_B)=2/3$. Otherwise suppose $\up(3)=RRR$.  Then edges $24$ and $25$ cannot both be red (else $(2435)$ is a red $C_4$). If edge $24$ blue, then we may take $P_B=03245$ which has $r(P_B)=4/5$. If edge $25$ is blue, then  we may take $P_B = 0325$ which has $r(P_B)=3/5$. 

So suppose edge $23$ is red. Then edges $12$ and $13$ cannot both be red (else $(123)$ is a red $C_3$).

First suppose both edges $12$ and $13$ are both blue.
Then $0213$ is a blue path. If $\up(3)$ contains a $B$, then we may take $P_B=0213$. Otherwise $\up(3)=RRR$ in which case edge $24$ is blue (else $(234)$ is a red $C_3$) and so we may take  $P_B =  031245$.

Now suppose that exactly one of $12$ or $13$ is blue and the other is red. Denote the blue edge as $1\a$ and the red edge as $1\b$, where $\a,\b\in\set{2,3}, \a\neq\b$. Notice that the edge $\a\b$ is red since this is the edge $23$.
\begin{itemize}
\item[] Suppose edge $14$ is blue. If $\up(4)$ contains a $B$, then we may take $P_B=0\a14$. Otherwise suppose $\up(4) = RRR$. 
\begin{itemize}
\item[] If edge $\b4$ is red, then the red graph on vertices $\set{0, \ldots, 7}$ forms a tree, and so any other edge on these vertices must be blue. In particular, edges $25, 35$ and $36$ are blue and so we may take $P_B=02536$ which has $r(P_B) = 2/3$.
\item[] So suppose edge $\b4$ is blue. If edge $\b5$ is blue, then we may take $P_B = 0\a14\b56$. If edge $\b5$ is red, then again, the red graph on vertices $\set{0,\ldots, 7}$ forms a tree, and so any other edge is blue. In particular, edge $\alpha 5$ is blue and so we may take $P_B= 0\b41\a56$.

\end{itemize}
\item[] Suppose edge $14$ is red. Then edges $24$ and $34$ are both blue since the red graph on vertices $\set{0,\ldots, 4}$ forms a tree. 

\begin{itemize}
\item[] Suppose edge $25$ is red.  Then the red graph on vertices $\set{0,\ldots, 5}$ forms a tree and so any other edge on these vertices must be blue. In particular edges $35$ and $45$ are blue. So $02435$ forms a blue path. If $\up(5)$ contains a $B$, then we may take $P_B=02435$. Otherwise suppose $\up(5)=RRR$, in which case we may take $P_B = 0245367$.
\item[] Suppose edge 25 is blue. If $\up(5)$ contains a $B$, then we may take $P_B= 02435$. Otherwise suppose that $\up(5)=RRR.$ If edge $46$ is red, then the red graph on $\set{0,\ldots, 8}$ forms a tree and so we may take $P_B=  0245367.$ If edge $46$ is blue, then we may take $P_B = 03467$ which has $r(P_B) = 4/7$.
\end{itemize}
\end{itemize}

\item\tbf{Case 7}  ($\up(0) = BBB$) 

 If $\up(1)$ contains a $B$, then we may take $P_B = 01$. Otherwise suppose $\up(1)=RRR$ in which case we may take $P_B = 023$.
\end{itemize}

\end{proof}

\section{Computer assisted improvement}\label{sec: comp}
With the use of a computer program (written in \texttt{python}, making use of the \texttt{networkx} package,  and made available at the url\footnote{A \texttt{.txt} file containing the output of the program is also available.} \url{http://msuweb.montclair.edu/~bald/research.html}) we have a proof of the following lemma which finds a higher density path than Lemma \ref{lem:mainlem}.

\begin{lemma}\label{lem:comp}
Let $Q = P_{43}$ on vertex set $\set{0,1,\ldots 42}$ and let $H = Q^3$. Suppose that $H$ has been 2-colored with no red cycles from $\mathcal{C}_{\le8}$. Further suppose that in $H$, $\up(0)\not\in\set{ RRR, RRB}$. Then there is a $k\in \set{1,\ldots 39}$ such that $H[\set{0,\ldots, k}]$ contains a blue path $P_B$ with endpoints $0$ and $k$ such that $up(k)\not\in\set{ RRR, RRB}$ and
\[r(P_B):=\frac{\abs{V(P_B)\cap \set{1,\ldots, k}}}{k} \ge \frac{19}{25}.\]
\end{lemma}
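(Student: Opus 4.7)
My plan is to generalize the recursive case analysis of Lemma \ref{lem:mainlem} to the denser target $19/25$ (in place of $4/7$) and the larger forbidden cycle family $\mathcal{C}_{\le 8}$ (in place of $\mathcal{C}_{\le 5}$). Because density $19/25$ is too tight to accommodate the worst starting configuration of Lemma \ref{lem:mainlem} (the $\up(0)=RRB$ case), we symmetrically strengthen the hypothesis by also forbidding $\up(0)=RRB$ and correspondingly insist that the witness endpoint $k$ satisfies $\up(k)\notin\{RRR,RRB\}$. The resulting case tree is too large to trace by hand, so I would automate the search as recursive backtracking over partial $2$-colorings of $E(H)$, where each edge is red, blue, or still undetermined. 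The root commits only to the hypothesis and branches into the six starting configurations $\up(0)\in\{BRR,RBR,BBR,BRB,RBB,BBB\}$.

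At each recursive node the algorithm would attempt to close the branch in one of two ways. First, if the committed red edges already contain some cycle from $\mathcal{C}_{\le 8}$, the branch corresponds to no valid extension and is declared successful vacuously. Second, it would search for a \emph{witness}: some $k\in\{1,\dots,39\}$ together with a blue path $P_B\subseteq H[\{0,\dots,k\}]$ using only committed blue edges, with endpoints $0$ and $k$, density $r(P_B)\ge 19/25$, and with $\up(k)\notin\{RRR,RRB\}$ in the partial coloring. If such a witness is found the branch succeeds; otherwise the algorithm would pick an undetermined edge $e$ in the local neighborhood of the current blue frontier (for example, the lowest-indexed up-edge from the rightmost blue-reachable vertex) and recurse on the two colorings of $e$. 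Since $H$ has finitely many edges the search terminates, and the lemma is established once every leaf returns success.

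The main obstacle is the sheer size of the search tree: $H=P_{43}^3$ has $3\cdot 43-6=123$ edges, so naive enumeration over $2^{123}$ colorings is hopeless. Effective pruning is essential, and the ingredients I would build in are: (i)~branching only on undetermined edges incident to vertices that can plausibly affect the outcome, namely those near the current blue frontier, so that most of the graph remains ``dormant''; (ii)~propagating forced choices, most notably the standing observation that $\up(v)=RRR$ forces the triangle on $\{v{+}1,v{+}2,v{+}3\}$ to be blue, and more generally that saturating any $\mathcal{C}_{\le 8}$ red constraint forces its complementary edges to blue; (iii)~using standard graph libraries such as \texttt{networkx} for rapid cycle detection and for finding the maximum-density blue path between two given vertices on committed edges. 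Correctness of the resulting program rests on the exhaustiveness of the recursion over all red/blue extensions consistent with the hypothesis, while its feasibility rests on these pruning heuristics; both are to be verified through the program's output log, which enumerates every terminal branch together with the witness it found.
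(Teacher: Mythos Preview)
Your proposal is correct and is essentially the same approach as the paper's own: a recursive computer search that iterates over partial colorings, closing a branch either when the committed red edges contain a short cycle or when a blue witness path of density at least $19/25$ to an endpoint $k$ with $\up(k)\notin\{RRR,RRB\}$ is found, with the exclusion of the $RRB$ starting case built in precisely to keep the case tree manageable. The only minor difference is that the paper branches on the entire triple $\up(k)$ (all eight possibilities at once) rather than on individual undetermined edges, but this is an implementation detail rather than a genuine divergence in method.
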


Using this improved density of $19/25=.76$, Theorem \ref{thm:comp} follows just as Theorem \ref{thm:main} followed from Lemma \ref{lem:mainlem}. The algorithm proceeds much as our proof of Lemma \ref{lem:mainlem} proceeds. Suppose $\up(0),\ldots, \up(k-1)$ have been assigned and one finds neither a red cycle nor a blue path of the desired ratio ending at $k-1$. Then we iterate through all $8$ possibilities for $\up(k)$, again searching for a red cycle or a high density blue path (ending at $k$) and deepening the recursion when neither is found.
 In order to cut down on cases checked, we forced the program to avoid the most work intensive ``Case 3'', hence the requirement $\up(0), \up(k)\not\in \set{RRR, RRB}$. Note that any coloring of $P_N^3$ with no red cycles must satisfy $\set{\up(0), \up(1)}\not\subseteq\set{ RRR, RRB}$ and so this is an okay assumption.  
 As a demonstration of the growth of complexity, we mention that the output of the program which verifies a density of $4/7$ (i.e. equivalent to the proof of Lemma \ref{lem:mainlem}) is a \texttt{.txt} file of size 85 KB. The file which verifies the density of $3/4$ has size 1.7 MB and the file which verifies the density of $19/25$ has size 34 MB. As discussed in Section \ref{sec:ub}, the best density one could hope for in $P_N^3$ is $7/9 \approx 0.7777$. Due to our proof method (stitching together segments), it is unlikely that our program (as currently written) will be able to prove the exact bound of $7/9$; one can color the portion near vertex 0 `badly' in a way that lowers the overall density of the segment.

\section{Lower Bound}\label{sec:lb}
In this section we prove Theorem \ref{thm:lb} by improving on the ideas which appear in \cite{DKP}.  
The following reduction lemma essentially appears as a lemma  in \cite{BD}. In that paper, the lemma concerns avoidance monochromatic paths in both colors rather than cycles in red and a path in blue. However, the proof is almost identical, so we have decided to omit it. This lemma allows us to concentrate on graphs with minimum degree at least 3.
\begin{lemma}\label{lem:mindeg3}
Let  $n$ be a positive integer with $n\geq 6$. If every connected graph with at most $m$ edges and minimum degree at least $3$  has a $2$-coloring such that the red graph is acyclic, and every blue path has order less than $n-2$, then every graph with at most $m$ edges has a $2$-coloring such that the red graph is acyclic and  every blue path has order less than $n$.
\end{lemma}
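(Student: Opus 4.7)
The plan is to explicitly construct the desired $2$-coloring of $G$ by decomposing $G$ into its pendant forest, its pure-cycle components, its $3$-core $G_3$, and the degree-$2$ ``chains'' connecting pairs of vertices in $V(G_3)$. Each piece is colored separately and then combined.

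First, I would color every edge in the pendant forest (the edges removed during the iterative pendant-removal process producing the $2$-core) red; these form a forest and contribute nothing to blue paths. Second, for each component of the $2$-core that is a pure cycle $C_\ell$, I would space $\lceil \ell/(n-1) \rceil$ red edges evenly around the cycle, so that red remains a matching and blue breaks into paths each of order at most $n-1$. Third, for each component of the $2$-core containing a vertex of degree at least $3$, I would apply the hypothesis to each connected component of its $3$-core (which has minimum degree at least $3$ and at most $m$ edges), obtaining a coloring with red acyclic and blue paths of order less than $n-2$. Finally, for each maximal chain of degree-$2$ vertices with endpoints $u,w \in V(G_3)$: if the chain has at least $3$ edges, color one interior edge blue and the remaining chain edges red; if the chain has exactly $2$ edges, color both red when $u,w$ lie in different trees of the red forest constructed so far, and otherwise color one edge red and the other blue (ensuring the interior chain vertex has at most one blue neighbor).

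To verify correctness, I would check that the red subgraph globally remains a forest: the pendant forest, the red matchings inside pure cycles, the red subgraph of $G_3$, and the red portions of chains all combine into a single forest because each chain's red contribution attaches as pendant paths to existing red trees (and the length-$2$ chain case is handled specifically to prevent red cycles). For blue paths: any blue path in $G$ has at most one chain-interior vertex at each of its two endpoints, since interior chain vertices have at most one blue edge; its middle segment lies in $G_3$ and so has at most $n-3$ vertices by the hypothesis. Therefore the total order of any blue path is at most $(n-3)+2=n-1<n$. Blue paths entirely inside pure cycles are already at most $n-1$ by construction, and the bound $n \geq 6$ covers the small base cases.

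The main obstacle is the careful bookkeeping in the chain-handling step: specifically, length-$2$ chains whose two endpoints already lie in the same tree of the red forest must be colored with at least one blue edge, since coloring them both red would close a red cycle. This is essentially the same kind of bookkeeping performed in \cite{BD} for the monochromatic path-vs-path version of this reduction lemma. Since the condition ``red has no path of order $n-2$'' appearing in \cite{BD} can be substituted with the weaker condition ``red is acyclic'' without affecting the core structure of the argument, I would follow the framework of \cite{BD} and refer the reader there for the detailed casework.
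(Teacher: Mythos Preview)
Your overall strategy---strip the pendant forest, handle pure-cycle components directly, reduce the remaining $2$-core components to a minimum-degree-$3$ object, and then carefully color the chains---is the right shape, and your ultimate deferral to \cite{BD} is exactly what the paper does: the paper omits the proof entirely and cites \cite{BD}, noting that the only change is replacing ``red avoids long paths'' by ``red is acyclic.'' At that level you are aligned with the paper.

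That said, the sketch itself has a real gap in the decomposition step. The object you call $G_3$, the $3$-core, is not the correct target for the hypothesis. Take a theta graph: two vertices $u,w$ joined by three internally disjoint long paths. Iteratively deleting degree-$\le 2$ vertices removes all path interiors, after which $u$ and $w$ drop to degree $0$ and are removed as well; the $3$-core is empty, and your decomposition colors none of these edges. More generally, the junction vertices of chains (the degree-$\ge 3$ vertices of the $2$-core) need not survive into the $3$-core, so ``chains with endpoints in $V(G_3)$'' does not exhaust the $2$-core. The object one actually wants is the multigraph obtained by \emph{suppressing} each chain of the $2$-core to a single edge; this has minimum degree $\ge 3$ by construction, but it may carry parallel edges and loops (as the theta graph shows), and since the hypothesis is stated for simple graphs one must handle those separately---keeping one representative per parallel class, coloring the surplus so as not to close red cycles or lengthen blue paths, and so on. This is precisely the bookkeeping that \cite{BD} carries out, so your final deferral lands in the right place, but the sketch as written sets up the wrong object and would fail on examples like the theta graph.
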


We also make use of the following lemma which shows how to find a forest in a bounded degree graph whose removal creates many vertices of degree 0 or 1 (thus unsuitable for paths in the remaining graph).
\begin{lemma}\label{lem:deg4n9}
Suppose $G$ is connected and has $n$ vertices and maximum degree $\Delta$. Then G contains a forest $F$ and disjoint subsets $A_0, A_1 \subseteq V(G)$ such that
\begin{enumerate}
\item $A_0\cup A_1$ is an independent set
\item $d_F(v) = d_G(v)$ for all $v\in A_0$
\item $d_F(v) \ge  d_G(v)-1$ for all $v\in A_1$
\item $|A_0| + \frac12 |A_1|\ge \gamma_\Delta n $ where \[\gamma_\Delta =\of{\frac{1}{\Delta^2 + \Delta + 2} + \frac{3}{2(\Delta^2 + 2\Delta  +3)}}. \]
\end{enumerate}

\end{lemma}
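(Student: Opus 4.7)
The plan is to build $A_0$, $A_1$ and the forest $F$ simultaneously via a greedy sweep over $V(G)$. Observe first that since $A_0\cup A_1$ must be independent, every edge ``forced'' into $F$ (all $d_G(v)$ incident edges when $v\in A_0$; all but one when $v\in A_1$) has an endpoint in $A_0\cup A_1$. So the forced edges form a bipartite graph between $A_0\cup A_1$ and its complement, and the forest requirement reduces to the acyclicity of this bipartite subgraph; any further non-forced edge of $G$ may then be added to $F$ greedily at the end without creating a cycle.

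I would process the vertices in BFS order from an arbitrary root. Upon reaching $v$, first attempt to insert $v$ into $A_0$; this succeeds provided no neighbor of $v$ lies in $A_0\cup A_1$ and forcing the edges at $v$ does not close a cycle in the current bipartite forced graph. Failing that, attempt to insert $v$ into $A_1$, choosing the single un-forced edge so as to destroy the unique obstructing cycle when exactly one exists. Otherwise skip $v$. The counting aim is that each successful $A_0$ insertion ``uses up'' at most $\Delta^2+\Delta+2$ vertices of $G$: the vertex $v$ itself, its $\le\Delta$ neighbors (made ineligible by independence), and at most $\Delta(\Delta-1)+1$ further vertices at distance $2$ that would close a $4$-cycle if admitted to $A_0$; the double-counting identity $\sum_{u\ne v}|N(u)\cap N(v)| = \sum_{w\in N(v)}(d_G(w)-1)$ lets us sharpen this count. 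An $A_1$ insertion uses up at most $(\Delta+1)^2+2=\Delta^2+2\Delta+3$ vertices, the extra budget reflecting that the free edge lets us tolerate one otherwise-blocking distance-$2$ vertex. Amortizing, each vertex of $G$ contributes at least $\tfrac{1}{\Delta^2+\Delta+2}$ to $|A_0|$ and $\tfrac{3}{\Delta^2+2\Delta+3}$ to $|A_1|$ (the factor $3$ arising from the three possible local ``shapes'' of a $4$-cycle obstruction destroyable by a single freed edge), which together yield $|A_0|+\tfrac12|A_1|\ge \gamma_\Delta n$.

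The main obstacle is controlling cycles of length $\ge 6$ in the bipartite forced graph: when $v$ is inserted, multiple such longer cycles could in principle close simultaneously, and the blocking bound above only accounts for $4$-cycles directly. Processing in BFS order while maintaining the invariant that the bipartite forced graph is locally tree-like around the current frontier should address this concern, since by the time $v$ is reached, all vertices of $A_0\cup A_1$ within graph distance $2$ of $v$ lie in a fresh ``layer'' whose interactions are fully controlled; any longer bipartite cycle through $v$ would have had one of its vertices blocked at an earlier step. Justifying the exact factor $3/2$ in the $A_1$ accounting is the second subtle point; I would mirror the discharging-style argument of \cite{DKP} with modifications, anticipating that Lemma~\ref{lem:mindeg3} (reduction to minimum degree $3$) is invoked separately to complete the proof of Theorem~\ref{thm:lb}.
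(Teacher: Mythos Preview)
Your proposal has real gaps, and the paper's argument is both simpler and structurally different.

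First, the cycle control. You worry about $4$-cycles, then $6$-cycles and longer, and propose a BFS-layer invariant to handle them. None of this is needed. The paper maintains the partition $A_0\cup A_1$, $X=N(A_0\cup A_1)$, $Y=V\setminus(A_0\cup A_1\cup X)$, and only ever admits a vertex $v\in Y$ into $A_0$ when $|N(v)\cap X|\le 1$, and into $A_1$ when $|N(v)\cap X|=2$ (keeping only one of the two back-edges in $F$). Thus every time a vertex enters $A_0\cup A_1$, at most one $F$-edge points back into the already-built part; acyclicity of $F$ is immediate, with no case analysis on cycle length and no BFS.

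Second, and more seriously, your amortization is off. You claim each $A_0$ insertion ``uses up'' at most $\Delta^2+\Delta+2$ vertices, which would only yield $|A_0|\ge n/(\Delta^2+\Delta+2)$; but $\gamma_\Delta$ requires $|A_0|\ge 2n/(\Delta^2+\Delta+2)$, i.e.\ a budget of $(\Delta^2+\Delta+2)/2$ per $A_0$-vertex. The missing factor $2$ comes not from local blocking counts but from the \emph{termination condition}: when Phase~1 ends, every remaining $v\in Y$ has at least two neighbours in $X$, so $2|Y|\le e(X,Y)\le(\Delta-1)|X|\le(\Delta-1)\Delta|A_0|$, whence $n=|A_0|+|X|+|Y|\le\tfrac{\Delta^2+\Delta+2}{2}|A_0|$. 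Similarly your ``factor $3$ from the three local shapes of a $4$-cycle obstruction'' is not a real argument; the $3$ in $\tfrac{3}{\Delta^2+2\Delta+3}$ comes from the Phase~2 termination condition $3|Y|\le e(X,Y)$ (every surviving $Y$-vertex has $\ge 3$ neighbours in $X$), giving $|A_0\cup A_1|\ge\tfrac{3}{\Delta^2+2\Delta+3}n$.

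Finally, the statement that ``each vertex of $G$ contributes at least $\tfrac{1}{\Delta^2+\Delta+2}$ to $|A_0|$ \emph{and} $\tfrac{3}{\Delta^2+2\Delta+3}$ to $|A_1|$'' cannot be correct in a single-pass greedy: a vertex lands in $A_0$, in $A_1$, or in neither. The paper instead runs two phases and combines two \emph{separate} global inequalities via the identity
\[
|A_0|+\tfrac12|A_1|=\tfrac12|A_0|+\tfrac12\bigl(|A_0|+|A_1|\bigr)\ge \tfrac12\cdot\tfrac{2n}{\Delta^2+\Delta+2}+\tfrac12\cdot\tfrac{3n}{\Delta^2+2\Delta+3}=\gamma_\Delta n.
\]
This two-phase structure with distinct termination thresholds, not a per-vertex discharging, is the mechanism you are missing.
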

\begin{proof}

We greedily build the forest $F$ and maintain disjoint sets $A_0, A_1, X, Y$. Throughout,  $X=N(A_0\cup A_1)$ and $Y = V(G)\setminus(A_0\cup A_1\cup X)$, and so there are no edges between $Y$ and $A_0\cup A_1$. Initialize $A_0, A_1, X, F = \emptyset$ and $Y=V(G).$ 

We start with \emph{Phase 1}. Begin by adding an arbitrary vertex to $A_0$, removing it from $Y$ and updating $X$. At each subsequent step of Phase 1, we  look for a vertex $v\in Y$ such that $|N(v)\cap X| \le 1$. If such a vertex $v$ exists, we add $v$ to $A_0$, add all of $v$'s incident edges to $F$, and include all of $v$'s neighbors in $X$. When no such vertex $v$ exists, then Phase 1 ends. At the end of Phase 1,  every vertex in $Y$ has at least $2$ neighbors in $X$ and every vertex in $X$ has at most $(\Delta-1)$ neighbors in $Y$ (since each vertex in $X$ has a neighbor in $A_0$), so 
$2|Y|\le e(X,Y)\le (\Delta-1)|X|$ and also $|X| \le \Delta|A_0|$.
So at the end of Phase 1 \[n = |A_0| + |X| + |Y| \le |A_0| + \Delta|A_0| + \frac{\Delta-1}{2}\Delta|A_0| = \of{\frac{\Delta^2+\Delta +2}{2}}|A_0|\] so $|A_0| \ge \frac{2}{\Delta^2 + \Delta + 2}n$.

In \emph{Phase 2} we add vertices to $A_1$ which have $|N(v)\cap X|\le2$. If there is a vertex with $|N(v)\cap X|\le1$, we handle it as above.
If no such $v$ exists, then we next look for a vertex $v\in Y$ such that $|N(v)\cap X|=2$. In this case, we move $v$ to $A_1$, we add to $F$, any edges incident to $v$ and not $X$. Of the two edges incident to both $v$ and $X$, we arbitrarily choose one to add to $F$. If no such $v$ exists, we terminate the process. At the end of Phase 2, every vertex in $Y$ has at least $3$ neighbors in $X$.

By construction, one can observe that $A_0\cup A_1$ remains independent since we only add vertices from $Y$. Also by construction, $F$ remains a forest  and the degree conditions in (ii) and (iii) are met. It remains to show that at the end of the process, $|A_0| + \frac12 |A_1| \ge \gamma_\Delta n.$

Note that $|X| \le \Delta|A_0\cup A_1|$ and that at the end of Phase 2, we have $3|Y| \le e(X,Y)\le (\Delta-1)|X|$. Thus at termination, we have $|Y|\le \frac{\Delta -1 }{3}|X|$ and so
\begin{align*}n = |A_0\cup A_1| + |X| +|Y|&\le |A_0\cup A_1| + \frac{\Delta+2}{3}|X| \\
&\le |A_0\cup A_1| +  \frac{\Delta+2}{3}\cdot\Delta |A_0\cup A_1|  \\
&=\frac{\Delta^2 + 2\Delta + 3}{3}  |A_0\cup A_1| 
\end{align*}
and so $|A_0|+|A_1| \ge \frac{3}{\Delta^2 + 2\Delta + 3} n$.
To finish, we observe
\begin{align*}
|A_0|+\frac12 |A_1| & = \frac12 |A_0| + \frac12 (|A_0|+|A_1|) \\
&\ge \of{ \frac12 \frac{2}{\Delta^2 + \Delta + 2} + \frac12  \frac{3}{\Delta^2 + 2\Delta + 3} }n = \gamma_\Delta n.
\end{align*}

\end{proof}

\begin{proof}[Proof of Theorem \ref{thm:lb}]
Suppose $G=(V,E)$ is connected, has $e = (2+\alpha)n$ edges and $G\to (\mathcal{C}, P_n)$. In light of Lemma \ref{lem:mindeg3}, we also assume that $\delta(G)\ge 3.$ We note that technically, by using Lemma \ref{lem:mindeg3}, we should now change our goal to finding a coloring such that the red graph is acyclic and every blue path is of order less than  $n-2$. For readability, we continue to forbid paths of order $n$ and mention that the $O(1)$ in the statement of Theorem \ref{thm:lb} takes care of the issue.
We may assume that $G$ has $N = (1+\beta)n$ vertices where $\beta \le \alpha$ (else we may take any spanning tree, color it red and note that there are too few remaining edges to have a blue path of order $n$). 

Let $X$ be the set of vertices of degree at least $4$.
Then $|X| \ge 2n-N$. To see this, note that by the assumption $G\to (\mathcal{C}, P_n)$, $G$ must have a path of order $n$ and we may color its edges red (which is acyclic in red). Then the uncolored edges must have a path of order $n$ (otherwise we could color them all blue). Thus we have two edge-disjoint paths $P_1, P_2$ on vertex sets $A_1, A_2$, each of size $n$, and any vertex in $A_1\cap A_2$ has degree at least 4. Thus we have $|X| \ge |A_1\cap A_2| = |A_1|+|A_2| - |A_1\cup A_2| \ge 2n- N$.

Let $B$ be the set of vertices of degree at least $d+1$ (we will end up taking $d=5$).
Then \begin{align*}
(4+2\alpha)n = 2e = \sum_v d(v)&\ge (d+1)|B| + 4(|X|-|B|) + 3(N - |X|) \\
& = (d-3)|B| + |X| + 3N \\
&\ge (d-3)|B| + 2n + 2N
\end{align*}
and so rearranging, we have
\[ |B| \le \frac{1}{d-3}\of{(2+2\alpha)n - 2N} = \frac{2}{d-3}\of{ (1+\alpha)n - N  } = \frac{2}{d-3}(\alpha-\beta)n.   \]
Let 
\[\gamma_d:=\of{\frac{1}{d^2 + d + 2} + \frac{3}{2(d^2 + 2d  +3)}}.\]
Note that $G[V\setminus B]$ has maximum degree $d$ and so we may apply Lemma \ref{lem:deg4n9} to each component of $G$ in order to find a forest $F$ and sets $A_0, A_1$
with 
\[|A_0| + \frac{1}{2}|A_1| \ge  \gamma_d\cdot\of{N-|B|}. \]
We color all edges in $F$ with red, complete this forest to a red tree in $G$ and then color the remaining edges in $G$ with blue.  
Let $R = V\setminus(A_0 \cup A_1\cup B)$. So every vertex in $A_0$ has only red edges to $R$ and every vertex in $A_1$ has at most one blue edge to $R$. 
Suppose $P = (v_1,v_2, \ldots, v_k)$ is a blue path. Note that if $v_i \in A_0$ for some $1< i < k$, then $v_{i-1}$ and $v_{i+1}$ must both be in $B$. Also, if $v_i \in A_0$ for some $1< i < k$, then at least one of $v_{i-1}$ and $v_{i+1}$ is in $B$. For $X\in \set{A_0, A_1, B, R}$, let $X'  = V(P)\cap X.$ 
So if we let $e_P(A_0\cup A_1, B)$  count the number of edges in $P$ with one end in $A_0\cup A_1$ and the other end in $B$, we have $2|A'_0| + |A'_1| -2\le e_P(A_0\cup A_1, B)\le  2|B'|$, and so $|A'_0| + |A_1'|  \le |B'| + \frac12 |A_1'|  +1$. We then have
\begin{align*}
|V(P)|  &= |R'| + |A'_0| + |A'_1| + |B'| \\
&\le |R|+ \frac12|A_1'|+ 2|B'| +1 \\
&\le N - |A_0|-|A_1| -|B|  + \frac12 |A_1'| + 2|B'| +1\\
&\le N - |A_0| - \frac12 |A_1| + |B| +1.
\end{align*}
We see that if $N - (|A_0| +\frac12 |A_1|) +|B| < n-1$, then there is no blue path of order $n$.

\begin{align*}
\frac{1}{n-1}\of{N-(|A_0| +\frac12 |A_1|) +|B|}&\le \frac{1}{n-1}\of{N -  \gamma_d\of{N-|B|} + |B|} \\
& = \frac{1}{n-1}\of{(1-\gamma_d) N + (1+\gamma_d)|B| } \\
&\le  (1-\gamma_d) (1+\beta) +   (1+\gamma_d) \cdot \frac{2}{d-3}(\alpha-\beta) + O(1/n)\\
&= (1 - \gamma_d) + \beta\of{1-\gamma_d - \frac{2}{d-3}(1+\gamma_d)} \\
&\qquad+ \alpha\of{\frac{2}{d-3}}(1+\gamma_d) + O(1/n).
\end{align*}
We set \[f(\alpha, \beta, d):= (1 - \gamma_d) + \beta\of{1-\gamma_d - \frac{2}{d-3}(1+\gamma_d)}+ \alpha\of{\frac{2}{d-3}}(1+\gamma_d).\]
This function is decreasing in $\beta$ for $d=4,5$ and increasing in $\beta$ for $d\ge 6$.
When $d=5$, we may maximize this function by setting $\beta=0$, and in this case we get 
\[f(\alpha, 0, 5) = \frac{651}{608}\alpha + \frac{565}{608}.\]
So we have that $N - (|A_0| +\frac12 |A_1|) +|B| < n-1$ whenever $\alpha < \frac{43}{651} - \Omega(1/n)$. One can check that using $d=4, 6$ yields the bounds $\alpha < 5/109$ and $\alpha < 39/709$ (recalling that for $d=6$, one must set $\beta = \alpha$ when maximizing) both of which are worse than $43/651$. For all $d\ge7$, the bound is also worse. 
\end{proof}

\section{Concluding Remarks}\label{sec:conc}
In this paper we have considered the size Ramsey number for the family of cycles versus a path of order $n$. In contrast to many recent results on size Ramsey numbers of paths and cycles, we use a non-random construction. This, however, is due to the fact that the question considered included forbidden short cycles. We note in passing that by considering the third power of a cycle $C_N^{3}$ with $N = \frac{25}{19} n + O(1)$, our proof easily implies that \[\hat{R}(\mathcal{C}_{\le 8}, \mathcal{C}_{\ge n}) \le 3.947n \] where $\mathcal{C}_{\ge n}$ is the family of all cycles of length at least $n$.

The most obvious open problem is to close the gap between the lower bound of $2.066n$ and the upper bound of $3.947n$.  It is possible that there is a nice  proof that every two coloring of $P_N^3$ contains a blue path of density $7/9$, but we were unable to find one. 

\subsection*{Acknowledgements} The first author would like to thank Louis DeBiasio for enlightening discussions on the topic.

\end{document}